\documentclass{arXiv}

\usepackage{amssymb}
\usepackage{graphicx}
\usepackage[cmtip,all]{xy}

\newtheorem{theorem}{Theorem}[section]
\newtheorem{lemma}[theorem]{Lemma}
\newtheorem{proposition}[theorem]{Proposition}

\theoremstyle{definition}

\theoremstyle{remark}

\numberwithin{equation}{section}

\begin{document}

\title[Semilinear Neumann Problem]
{Rigidity for a semilinear Neumann problem with exponential nonlinearity in the large diffusion limit}

\author[J. Seo]{Juneyoung Seo}
\address{Department of Mathematics and Institute of Mathematical Science,
Pusan National University, Busan 46241, Republic of Korea}
\email{juneys@pusan.ac.kr}
\thanks{The author was supported by the National Research Foundation of Korea grant funded by the Korea government
(MSIT) (RS-2022-NR072398).}

\subjclass{Primary 35J61, 35B45, 35B32}
\keywords{Semilinear Neumann problem, Large diffusion limit, Exponential nonlinearity}

\date{\today}

\begin{abstract}
We consider a semilinear Neumann problem with exponential nonlinearity in a smooth bounded domain $\Omega \subset \mathbb{R}^2$. We prove that there exists a threshold $\bar{\varepsilon}>0$ such that for all $\varepsilon>\bar{\varepsilon}$, any classical solution must be constant. This result provides a positive answer to a conjecture recently posed by Calanchi, Ciraolo, and Messina (2026). Our proof relies on a combination of $L^1$-estimates, a Jensen-type argument via the Neumann Green's function to obtain uniform exponential integrability, and elliptic regularity.
\end{abstract}

\maketitle

\section{Introduction}
This paper is concerned with the semilinear Neumann problem
\begin{equation} \label{eq:main}
\begin{cases}
-\varepsilon \Delta u = e^u - 1 - au & \text{in } \Omega, \\
\frac{\partial u}{\partial \nu} = 0 & \text{on } \partial\Omega,
\end{cases}
\end{equation}
on a smooth bounded domain $\Omega\subset\mathbb{R}^2$, where $\nu$ denotes the outward unit normal, and $a > 1$ and $\varepsilon > 0$ are constants. Recently, Calanchi, Ciraolo, and Messina~\cite{CCM} showed that for sufficiently small $\varepsilon$ the problem admits nonconstant mountain-pass solutions. Furthermore, they conjectured that all classical solutions are constant for $\varepsilon$ sufficiently large. In this paper, we give a positive answer to this conjecture.

The stationary problem \eqref{eq:main} arises naturally as the steady-state equation of reaction-diffusion dynamics and is closely related to spatial pattern formation. Solutions are shaped by the competition between diffusion and nonlinear reaction. The parameter $\varepsilon>0$ denotes the diffusion rate. 

The classical rigidity result of Casten and Holland~\cite{CH}, and independently Matano~\cite{Ma}, asserts that stable stationary solutions of semilinear Neumann problems on bounded convex domains are constant. This result highlights the influence of the domain geometry on the existence of nonconstant solutions.

The magnitude of the diffusion coefficient itself also plays a decisive role in the rigidity of solutions. In their seminal paper, Lin, Ni, and Takagi~\cite{LNT} proved that for power-type nonlinearities with subcritical Sobolev exponents, if the diffusion coefficient is sufficiently large, the corresponding semilinear Neumann problem admits only the constant solution. Physically, this means that in a strongly diffusion-dominated regime, local concentrated patterns cannot emerge, and the system inevitably settles into a spatially uniform steady state.

Conversely, when the diffusion rate is small, the system exhibits highly localized nonconstant patterns. Shifting the focus to this singular perturbation regime ($\varepsilon\to0$), Lin, Ni, and Takagi~\cite{LNT} studied the following problem:
\begin{equation} \label{eq:power}
\begin{cases}
-\varepsilon \Delta u + u = u^p & \text{in } \Omega, \\
u>0 & \text{in } \Omega, \\
\frac{\partial u}{\partial \nu} = 0 & \text{on } \partial\Omega,
\end{cases}
\end{equation}
They showed that for such power-type nonlinearities, as $\varepsilon\to0,$ \eqref{eq:power} admits a nonconstant mountain-pass type solution. Shortly after, Ni and Takagi~\cite{NT1, NT2} proved that this least-energy solution possesses a single spike, and its peak is necessarily located on the boundary $\partial\Omega$. Furthermore, through a rigorous asymptotic expansion of the energy, they established that as $\varepsilon \to 0$, this peak concentrates precisely at the maximum point of the mean curvature of the boundary, establishing a deep link between the nonlinearity and the domain's boundary geometry.

In two dimensions, exponential nonlinearities are governed by the Moser--Trudinger inequality, and the analysis is considerably more delicate than in the polynomial-growth case. Such nonlinearities arise in several models, including chemotaxis, mean field equations for Euler flows, self-dual Chern–Simons–Higgs models, and prescribed Gaussian curvature problems; see \cite{CL, DJLW, KS} and the references therein. 

The main contribution of this paper is to establish uniform a priori $L^{\infty}$-bounds independent of the diffusion parameter in the presence of exponential nonlinearity. 
This allows us to combine nonlinear estimates with spectral arguments to obtain complete rigidity in the large diffusion regime. To the best of our knowledge, this is the first result establishing large diffusion rigidity for semilinear Neumann problems with exponential nonlinearities in two dimensions.

Our main result is stated as follows:

\begin{theorem}\label{thm:main}
Let $a > 1$. There exists a constant $\bar{\varepsilon} > 0$, depending only on $a$ and $\Omega$, such that for every $\varepsilon > \bar{\varepsilon}$, any classical solution $u$ to \eqref{eq:main} must be a constant. Consequently, the only solutions are $u \equiv 0$ and $u \equiv \xi_a$, where $\xi_a > 0$ is the unique positive root of $e^t - 1 - at = 0$.
\end{theorem}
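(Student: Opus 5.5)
The plan is to prove a uniform $L^\infty$ bound for all solutions with $\varepsilon\ge 1$, independent of $\varepsilon$, and then conclude with a spectral (Poincaré) argument. Write $f(u)=e^u-1-au$. Integrating \eqref{eq:main} over $\Omega$ and using the Neumann condition gives $\int_\Omega f(u)\,dx=0$, that is,
\[
\int_\Omega (e^u-1)\,dx = a\int_\Omega u\,dx .
\]
Since $f(t)\ge c_a|t|-C_a$ for a suitable $c_a>0$ (it grows like $e^t$ as $t\to\infty$ and like $(a-1)|t|$ as $t\to-\infty$), we write $f=f^+-f^-$ and use $\int_\Omega f^+=\int_\Omega f^-$. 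Since $f^-$ is supported where $f<0$, which lies in the bounded interval $[0,\xi_a]$, we get $f^-\le C$, and hence $\|f(u)\|_{L^1}\le C(a,\Omega)$. The same reasoning bounds $\|u\|_{L^1}$, since $u\le C+Cf^+$ for $u>0$ and $|u|\le C f(u)$ for $u<0$ large. So $u$ is uniformly bounded in $L^1$ and $e^u$ is uniformly bounded in $L^1$, with constants independent of $\varepsilon$.

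Next comes exponential integrability via the Neumann Green's function $G$, which satisfies $|G(x,y)|\le \frac1\pi\log\frac1{|x-y|}+C$ in two dimensions, the coefficient $\frac1\pi$ accounting for boundary points. We have
\[
u(x)-\bar u=\frac1\varepsilon\int_\Omega G(x,y)f(u(y))\,dy .
\]
Set $\mu=\|f(u)\|_{L^1}\le C$. By Jensen's inequality applied to the probability measure $|f(u)|\,dy/\mu$,
\[
\int_\Omega e^{p|u-\bar u|}\,dx\le \int_\Omega\!\int_\Omega \exp\Big(\frac{p\mu}{\varepsilon}|G(x,y)|\Big)\frac{|f(u(y))|}{\mu}\,dy\,dx ,
\]
which is bounded uniformly as soon as $p\mu/(\pi\varepsilon)<2$. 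Thus for $\varepsilon\ge\varepsilon_0(a,\Omega)$ we get $\|e^{|u-\bar u|}\|_{L^p}\le C$ for, say, $p=4$. Also $|\bar u|\le C$ by the $L^1$ bound. Hence $e^u$ is bounded in $L^2$, so $f(u)$ is bounded in $L^2$. Elliptic $W^{2,2}$ regularity for $-\Delta(u-\bar u)=f(u)/\varepsilon$ then gives $\|u-\bar u\|_{L^\infty}\le C/\varepsilon$, and therefore $\|u\|_{L^\infty}\le M(a,\Omega)$ uniformly for $\varepsilon\ge\varepsilon_0$. I expect this step, obtaining integrability uniform in $\varepsilon$ despite the critical Moser--Trudinger growth, to be the main obstacle. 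Large $\varepsilon$ is exactly what makes it work, since the small factor $\mu/\varepsilon$ keeps the Green's kernel exponent below the critical value.

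For rigidity, let $w=u-\bar u$. Testing the equation with $w$ and using $\int_\Omega w=0$ gives
\[
\varepsilon\int_\Omega|\nabla w|^2=\int_\Omega (f(u)-f(\bar u))w\le L_M\int_\Omega w^2\le \frac{L_M}{\mu_2}\int_\Omega|\nabla w|^2,
\]
where $L_M=\sup_{|t|\le M}|f'(t)|$ and $\mu_2$ is the first nonzero Neumann eigenvalue of $\Omega$. For $\varepsilon>\bar\varepsilon:=\max(\varepsilon_0,L_M/\mu_2)$ this forces $\nabla w\equiv0$, so $u$ is constant. A constant solution must satisfy $f(u)=0$. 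Because $f$ is convex with $f(0)=0$ and $f'(0)=1-a<0$, its only zeros are $0$ and $\xi_a$.
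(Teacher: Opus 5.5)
Your proposal is correct and follows essentially the same route as the paper. Both arguments use the $L^1$ bound from $\int_\Omega f(u)\,dx=0$ together with the lower bound on $f$, then Brezis--Merle/Jensen exponential integrability via the Neumann Green's function for large $\varepsilon$, then elliptic regularity to reach a uniform $L^\infty$ bound, and finally the Poincaré spectral-gap argument. The differences are only cosmetic: you bound $\bar u$ through an $L^1$ bound on $u$ instead of the paper's Jensen argument giving $0\le \bar u\le \xi_a$, you use $W^{2,2}\hookrightarrow L^\infty$ in place of $W^{2,q}$ with $q>2$, and you also justify explicitly why $0$ and $\xi_a$ are the only constant solutions.
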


Let $f(u) := e^u - 1 - au$. Integrating \eqref{eq:main} over $\Omega$ yields the identity:
\begin{equation} \label{eq:zero_average}
\int_\Omega f(u) \, dx = 0.
\end{equation}
This identity plays a key role in establishing uniform a priori bounds independent of $\varepsilon$. We also decompose the solution as $u = \bar{u} + v$, where $\bar{u}=\frac{1}{|\Omega|}\int_{\Omega}u\ dx$ is the spatial average of $u$.

\section{Uniform A Priori Estimates}
In this section, we establish uniform $L^\infty$-bounds for the solutions of \eqref{eq:main} as $\varepsilon \to \infty$. Due to the critical exponential growth in dimension two, standard bootstrap arguments are not directly applicable. We overcome this by exploiting the Green's function and Jensen's inequality.

\begin{lemma}\label{lem:L1_bound}
There exists a constant $C_1> 0$, depends only on $a>1$ and $\Omega$, such that
\begin{equation*}
\|f(u)\|_{L^1(\Omega)} \le C_1.
\end{equation*}
\end{lemma}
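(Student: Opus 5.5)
We want $\|f(u)\|_{L^1}\le C_1$ uniformly in $\varepsilon$. The plan is to use the zero-average identity \eqref{eq:zero_average} together with the shape of $f$ to show that $f(u)$ cannot be large in $L^1$. Since $\int_\Omega f(u)\,dx=0$, we have $\int_\Omega f(u)^+ = \int_\Omega f(u)^-$, so
\[
\|f(u)\|_{L^1(\Omega)} = 2\int_\Omega f(u)^-\,dx .
\]
It therefore suffices to bound the negative part of $f$ pointwise.

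The function $f(t)=e^t-1-at$ is convex, with $f(0)=0$, $f'(0)=1-a<0$, and $f(\xi_a)=0$. Hence $f<0$ exactly on the interval $(0,\xi_a)$ and $f\ge 0$ elsewhere. On $(0,\xi_a)$ the function $f$ is continuous and bounded below by its minimum, attained at $t=\log a$:
\[
m_a:=\min f = a - 1 - a\log a <0 .
\]
Consequently $0\le f(u)^-\le |m_a|=a\log a-a+1$ pointwise.

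This gives $\int_\Omega f(u)^-\,dx\le (a\log a-a+1)|\Omega|$, and so
\[
\|f(u)\|_{L^1(\Omega)}\le 2(a\log a - a+1)|\Omega| =: C_1 ,
\]
which depends only on $a$ and $\Omega$.

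No step is a genuine obstacle. The only points to check are the sign structure of $f$ (convexity together with its two roots $0$ and $\xi_a$) and the validity of \eqref{eq:zero_average}. The latter follows from integrating \eqref{eq:main} and applying the Neumann condition, which is legitimate for classical solutions on a smooth bounded domain.
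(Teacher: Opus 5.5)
Your proposal is correct and matches the paper's proof: you use the zero-average identity $\int_\Omega f(u)\,dx=0$ to get $\int_\Omega f^+(u)\,dx=\int_\Omega f^-(u)\,dx$, then bound $f^-(u)$ pointwise by $-\min f = a\log a - a + 1$ (the minimum is attained at $t=\log a$). This gives exactly the paper's constant $C_1 = 2(a\log a - a+1)|\Omega|$. Your additional description of the sign structure of $f$ (negative precisely on $(0,\xi_a)$) is accurate but not needed beyond the global lower bound.
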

\begin{proof}
Integrating the equation \eqref{eq:main} over $\Omega$ and using the homogeneous Neumann boundary condition, we obtain by the divergence theorem:
\begin{equation} \label{eq:integral_zero}
\int_\Omega f(u) \, dx = -\varepsilon \int_\Omega \Delta u \, dx = -\varepsilon \int_{\partial\Omega} \frac{\partial u}{\partial \nu} \, dS = 0.
\end{equation}
Note that the function $f(t) = e^t - 1 - at$ attains its global minimum at $t = \log a$. Let $-C_0$ denote this minimum value, that is,
\begin{equation*}
f(t) \ge f(\log a) = a - 1 - a \log a := -C_0 \quad \text{for all } t \in \mathbb{R}.
\end{equation*}

Let us decompose $f(u)$ into its positive and negative parts, $f(u) = f^+(u) - f^-(u)$, where $f^{\pm}(u)\ge0$.
The global lower bound on $f$ implies that $0 \le f^-(u) \le C_0$ pointwise in $\Omega$.
From \eqref{eq:integral_zero}, we have
\begin{equation*}
\int_\Omega f^+(u) \, dx = \int_\Omega f^-(u) \, dx \le \int_\Omega C_0 \, dx = C_0 |\Omega|.
\end{equation*}

Consequently, the $L^1$-norm of $f(u)$ can be estimated as follows:
\begin{equation*}
\|f(u)\|_{L^1(\Omega)} = \int_\Omega f^+(u) \, dx + \int_\Omega f^-(u) \, dx \le 2 C_0 |\Omega|.
\end{equation*}
Setting $C_1 = 2 C_0 |\Omega|$, we see that $C_1$ is independent of $\varepsilon$ and the solution $u$. This proves the lemma.
\end{proof}

Next, we represent the fluctuation $v = u - \bar{u}$ using the Green's function $G(x,y)$:
\begin{equation*}
    \begin{cases}
-\Delta_x G(x,y) = \delta_y-\frac{1}{|\Omega|} & \text{in } \Omega, \\
\frac{\partial G}{\partial \nu} = 0 & \text{on } \partial\Omega,
\end{cases}
\end{equation*}
with normalization $\int_{\Omega}G(x,y)\ dx = 0$ for each $y\in\Omega$.
Using the logarithmic singularity of $G(x,y)$ in dimension two and the vanishing $L^1$ mass of the right-hand side $\frac{1}{\varepsilon}f(u)$, and following the Brezis--Merle approach \cite{BM}, we obtain the following exponential integrability estimate.

\begin{lemma}\label{lem:jensen}
For any $q > 2$, there exist constants $C_q > 0$ and $\varepsilon_0 > 0$ such that for all $\varepsilon > \varepsilon_0$,
\begin{equation*}
\int_\Omega e^{q|u(x) - \bar{u}|} \, dx \le C_q.
\end{equation*}
\end{lemma}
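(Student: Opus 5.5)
We write $v=u-\bar u$. By Green's representation,
\begin{equation*}
v(x)=\frac{1}{\varepsilon}\int_\Omega G(x,y)\,f(u(y))\,dy .
\end{equation*}
The constant term drops out because $\int_\Omega f(u)=0$ by \eqref{eq:zero_average}, and $v$ has zero mean by the normalization $\int_\Omega G(x,y)\,dx=0$. We use the standard bound for the Neumann Green's function on a smooth bounded planar domain,
\begin{equation*}
|G(x,y)|\le \frac{1}{\pi}\log\frac{1}{|x-y|}+C_\Omega \qquad (x,y\in\Omega).
\end{equation*}
The constant is $1/\pi$ rather than $1/(2\pi)$ so that the reflected singularity is covered for $y$ near $\partial\Omega$. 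Only this crude form is needed.

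Set $\mu=\|f(u)\|_{L^1(\Omega)}$. By Lemma \ref{lem:L1_bound}, $\mu\le C_1$, and we may assume $\mu>0$, since otherwise $v\equiv0$. Then
\begin{equation*}
q|v(x)|\le \frac{q}{\varepsilon}\int_\Omega |G(x,y)|\,|f(u(y))|\,dy
=\int_\Omega \frac{q\mu}{\varepsilon}|G(x,y)|\,\frac{|f(u(y))|\,dy}{\mu}.
\end{equation*}
The measure $d\nu=|f(u)|\,dy/\mu$ is a probability measure on $\Omega$. Jensen's inequality for the convex function $\exp$ therefore gives
\begin{equation*}
e^{q|v(x)|}\le \int_\Omega \exp\Big(\frac{q\mu}{\varepsilon}|G(x,y)|\Big)\,d\nu(y)
\le \int_\Omega e^{\frac{qC_1}{\varepsilon}C_\Omega}\,|x-y|^{-\frac{qC_1}{\pi\varepsilon}}\,d\nu(y).
\end{equation*}
We integrate in $x$ and apply Fubini:
\begin{equation*}
\int_\Omega e^{q|v|}\,dx\le e^{qC_1C_\Omega/\varepsilon}\,\sup_{y\in\Omega}\int_\Omega |x-y|^{-\frac{qC_1}{\pi\varepsilon}}\,dx .
\end{equation*}

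Choose $\varepsilon_0=qC_1/\pi$, so the exponent is less than $1$ for $\varepsilon>\varepsilon_0$; any exponent below $2$ already suffices in dimension two. Then
\begin{equation*}
\sup_{y\in\Omega}\int_\Omega |x-y|^{-s}\,dx\le \frac{2\pi}{2-s}\,(\operatorname{diam}\Omega)^{2-s},
\end{equation*}
which is bounded uniformly for $s\in[0,1]$. The prefactor is also bounded, by $e^{\pi C_\Omega}$. This yields a constant $C_q$ depending only on $q$, $a$ and $\Omega$. The restriction $q>2$ plays no role; the argument works for every $q>0$, with $\varepsilon_0$ depending on $q$.

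The main obstacle is justifying the pointwise logarithmic bound on $G$ uniformly up to the boundary, including the constant $C_\Omega$. We would take it from standard constructions of the Neumann function: the decomposition
\begin{equation*}
G(x,y)=-\frac{1}{2\pi}\log|x-y|+H(x,y),
\end{equation*}
with $H$ controlled by $-\frac{1}{2\pi}\log|x-y^*|$ plus a bounded term near the boundary, where $y^*$ is the reflection of $y$ across $\partial\Omega$, and with $|x-y^*|\gtrsim |x-y|$. If only an $L^p$ or weak-type bound on $G$ is available, we would instead run the Brezis--Merle argument directly. Either route gives the conclusion because $\mu/\varepsilon\to0$.
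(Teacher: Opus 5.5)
Your proposal is correct and is essentially the paper's argument. Both use the Green's representation of $v$ with the crude bound $|G(x,y)|\le\frac{1}{\pi}\log\frac{D}{|x-y|}+K$, Jensen's inequality against the probability measure $|f(u)|\,dy/\|f(u)\|_{L^1(\Omega)}$, Fubini, and the smallness $\|f(u)\|_{L^1(\Omega)}/\varepsilon\le C_1/\varepsilon$, with the same threshold $\varepsilon_0=qC_1/\pi$. The only cosmetic difference is that the paper normalizes the Jensen coefficient to $\pi/\|f_\varepsilon(u)\|_{L^1(\Omega)}$, so the kernel becomes $D/|x-y|$, and then uses that this coefficient exceeds $q$; you instead keep the coefficient $q\mu/\varepsilon$ and integrate $|x-y|^{-s}$ with $s<1$ directly.
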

\begin{proof}
Let $v = u - \bar{u}$. Then $v$ satisfies the following Neumann problem:
\begin{equation*}
\begin{cases}
-\Delta v = f_\varepsilon(u) := \frac{1}{\varepsilon} f(u) & \text{in } \Omega, \\
\frac{\partial v}{\partial \nu} = 0 & \text{on } \partial\Omega.
\end{cases}
\end{equation*}
Let $G(x,y)$ be the Green's function for the Neumann Laplacian on $\Omega$.
Since $\Omega \subset \mathbb{R}^2$ is a bounded smooth domain, the fundamental
solution of the Laplacian has a logarithmic singularity, and the Green's function
admits the decomposition $G(x,y) = -\frac{1}{2\pi}\log\frac{1}{|x-y|} + H(x,y)$,
where $H$ is bounded on $\overline{\Omega} \times \overline{\Omega}$.
In particular, there exist constants $D = \mathrm{diam}(\Omega)$ and $K > 0$
depending only on $\Omega$ such that
\begin{equation*}
|G(x,y)| \le \frac{1}{\pi} \log \left( \frac{D}{|x-y|} \right) + K
\quad \text{for all } x, y \in \Omega,\ x \neq y.
\end{equation*}
Using the representation formula $v(x) = \int_\Omega G(x,y) f_\varepsilon(u(y)) \, dy$ and following the classical strategy introduced in \cite[Theorem 1]{BM}, we can estimate the absolute value of the fluctuation $v(x)$ as follows:
\begin{equation*}
|v(x)| \le \int_\Omega \left( \frac{1}{\pi} \log \left( \frac{D}{|x-y|} \right) + K \right) |f_\varepsilon(u(y))| \, dy.
\end{equation*}
If $\|f_\varepsilon(u)\|_{L^1(\Omega)} = 0$, then $v \equiv 0$ and the lemma holds trivially. Thus, we may assume $\|f_\varepsilon(u)\|_{L^1(\Omega)} > 0$. Dividing both sides by $\|f_\varepsilon(u)\|_{L^1(\Omega)}$ and multiplying by $\pi$, we obtain
\begin{equation*}
\frac{\pi}{\|f_\varepsilon(u)\|_{L^1(\Omega)}} |v(x)| \le \int_\Omega \log \left( \frac{D}{|x-y|} \right) \frac{|f_\varepsilon(u(y))|}{\|f_\varepsilon(u)\|_{L^1(\Omega)}} \, dy + \pi K.
\end{equation*}
Since $\frac{|f_\varepsilon(u(y))|}{\|f_\varepsilon(u)\|_{L^1(\Omega)}} \, dy$ defines a probability measure on $\Omega$, we can apply Jensen's inequality to the convex function $t \mapsto e^t$:
\begin{align*}
\exp\left( \frac{\pi}{\|f_\varepsilon(u)\|_{L^1(\Omega)}} |v(x)| \right) 
&\le e^{\pi K} \exp \left( \int_\Omega \log \left( \frac{D}{|x-y|} \right) \frac{|f_\varepsilon(u(y))|}{\|f_\varepsilon(u)\|_{L^1(\Omega)}} \, dy \right) \\
&\le e^{\pi K} \int_\Omega \exp \left( \log \left( \frac{D}{|x-y|} \right) \right) \frac{|f_\varepsilon(u(y))|}{\|f_\varepsilon(u)\|_{L^1(\Omega)}} \, dy \\
&= e^{\pi K} \int_\Omega \frac{D}{|x-y|} \frac{|f_\varepsilon(u(y))|}{\|f_\varepsilon(u)\|_{L^1(\Omega)}} \, dy.
\end{align*}
Integrating both sides with respect to $x$ over $\Omega$ and applying Fubini's theorem yields
\begin{align*}
\int_\Omega \exp\left( \frac{\pi}{\|f_\varepsilon(u)\|_{L^1(\Omega)}} |v(x)| \right) \, dx 
&\le e^{\pi K} \int_\Omega \left( \int_\Omega \frac{D}{|x-y|} \, dx \right) \frac{|f_\varepsilon(u(y))|}{\|f_\varepsilon(u)\|_{L^1(\Omega)}} \, dy \\
&\le e^{\pi K} C_2 \int_\Omega \frac{|f_\varepsilon(u(y))|}{\|f_\varepsilon(u)\|_{L^1(\Omega)}} \, dy = C_2 e^{\pi K},
\end{align*}
where $C_2 := \sup_{y \in \Omega} \int_\Omega \frac{D}{|x-y|} \, dx \le 2\pi D^2$ by a direct calculation in polar coordinates.
Now, by Lemma \ref{lem:L1_bound}, we have the uniform bound $\|f(u)\|_{L^1(\Omega)} \le C_1$, which implies
\begin{equation*}
\|f_\varepsilon(u)\|_{L^1(\Omega)} = \frac{1}{\varepsilon} \|f(u)\|_{L^1(\Omega)} \le \frac{C_1}{\varepsilon}.
\end{equation*}
Consequently, we obtain the lower bound for the exponent coefficient:
\begin{equation*}
\frac{\pi}{\|f_\varepsilon(u)\|_{L^1(\Omega)}} \ge \frac{\pi \varepsilon}{C_1}.
\end{equation*}
For any $q > 2$, let us choose $\varepsilon_0 := \frac{q C_1}{\pi} > 0$. For all $\varepsilon > \varepsilon_0$, it is guaranteed that $\frac{\pi}{\|f_\varepsilon(u)\|_{L^1(\Omega)}} > q$. Therefore,
\begin{equation*}
\int_\Omega e^{q|v(x)|} \, dx \le \int_\Omega \exp\left( \frac{\pi}{\|f_\varepsilon(u)\|_{L^1(\Omega)}} |v(x)| \right) \, dx \le C_2 e^{\pi K}.
\end{equation*}
Setting $C_q := C_2 e^{\pi K}$, which is clearly independent of $\varepsilon$ and $u$, completes the proof.
\end{proof}

Using Lemma \ref{lem:jensen} and standard elliptic regularity theory in \cite{GT}, we can improve the integrability to obtain a uniform $L^\infty$-bound.

\begin{proposition}\label{prop:L_infty}
There exists a constant $M > 0$, independent of $\varepsilon$, such that for all $\varepsilon$ sufficiently large, any solution $u$ to \eqref{eq:main} satisfies
\begin{equation*}
\|u\|_{L^\infty(\Omega)} \le M.
\end{equation*}
\end{proposition}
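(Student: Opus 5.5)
The plan is to bound separately the two pieces of the decomposition $u=\bar u+v$. First I show that the average $\bar u$ lies in a fixed interval depending only on $a$. Then I show that $\|v\|_{L^\infty(\Omega)}$ is uniformly bounded, in fact $O(1/\varepsilon)$, by elliptic regularity fed by Lemma~\ref{lem:jensen}.

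\emph{Step 1: bounding $\bar u$.} I use the zero-average identity \eqref{eq:zero_average}, written as
\begin{equation*}
e^{\bar u}\int_\Omega e^{v}\,dx = \int_\Omega (1+au)\,dx = |\Omega|\,(1+a\bar u).
\end{equation*}
Since $\int_\Omega v\,dx=0$, Jensen's inequality gives $\int_\Omega e^{v}\,dx\ge |\Omega|\,e^{0}=|\Omega|$. Hence $e^{\bar u}\le 1+a\bar u$.

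Because $t\mapsto e^t-1-at$ is convex with zeros exactly at $0$ and $\xi_a$, the inequality $e^t\le 1+at$ holds precisely for $t\in[0,\xi_a]$. Therefore $0\le \bar u\le \xi_a$ for every solution and every $\varepsilon$. I expect this to be the crux, since a priori nothing prevents $\bar u$ from drifting. This step also needs no largeness of $\varepsilon$.

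\emph{Step 2: $L^2$ bound for the right-hand side.} Pointwise, Step 1 gives
\begin{equation*}
|f(u)|\le e^{\xi_a}e^{|v|}+1+a\xi_a+a|v|\le C(a)\,e^{|v|}.
\end{equation*}
Applying Lemma~\ref{lem:jensen} with $q=3$ (say), we get $\|e^{|v|}\|_{L^2(\Omega)}^2\le \int_\Omega e^{3|v|}\,dx\le C_3$ for $\varepsilon>\varepsilon_0$, using $e^{2|v|}\le e^{3|v|}$. Hence $\|f(u)\|_{L^2(\Omega)}\le C$ with $C$ independent of $\varepsilon$.

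\emph{Step 3: elliptic regularity for $v$.} The fluctuation $v$ solves $-\Delta v=\varepsilon^{-1}f(u)$ in $\Omega$ with $\partial_\nu v=0$ on $\partial\Omega$, and has zero mean. The $W^{2,2}$ estimate for the Neumann problem on the smooth domain $\Omega$ (cf.~\cite{GT}) applies. Combined with the Poincaré–Wirtinger inequality to control the lower-order term for mean-zero functions, it yields
\begin{equation*}
\|v\|_{W^{2,2}(\Omega)}\le C\,\|\Delta v\|_{L^2(\Omega)}\le \frac{C}{\varepsilon}.
\end{equation*}

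\emph{Step 4: conclusion.} In dimension two, $W^{2,2}(\Omega)\hookrightarrow C(\overline\Omega)$, so $\|v\|_{L^\infty(\Omega)}\le C/\varepsilon\le C$ for $\varepsilon>\varepsilon_0$. Together with Step 1 this gives
\begin{equation*}
\|u\|_{L^\infty(\Omega)}\le \xi_a+C=:M.
\end{equation*}
The constant $M$ depends only on $a$ and $\Omega$.
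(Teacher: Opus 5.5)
Your proposal is correct and follows essentially the same route as the paper. Both arguments use the Jensen bound $0\le\bar u\le\xi_a$ from the zero-average identity, an $L^p$ bound on $f(u)$ via Lemma~\ref{lem:jensen}, the Neumann $W^{2,p}$ estimate for the mean-zero $v$, and Sobolev embedding; the only difference is that you take $p=2$ (invoking Lemma~\ref{lem:jensen} with $q=3$) where the paper takes $p=q>2$, which is fine since $W^{2,2}(\Omega)\hookrightarrow C(\overline{\Omega})$ in dimension two.
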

\begin{proof}
First, we establish a uniform upper bound for the spatial average $\bar{u} = \frac{1}{|\Omega|} \int_\Omega u \, dx$. 
By integrating the equation \eqref{eq:main} and using the Neumann boundary condition, we have
\begin{equation*}
\int_\Omega (e^u - 1 - au) \, dx = 0,
\end{equation*}
which implies
\begin{equation*}
\frac{1}{|\Omega|} \int_\Omega e^u \, dx = 1 + a \bar{u}.
\end{equation*}
Since $x \mapsto e^x$ is a convex function, Jensen's inequality yields
\begin{equation*}
e^{\bar{u}} = \exp\left( \frac{1}{|\Omega|} \int_\Omega u \, dx \right) \le \frac{1}{|\Omega|} \int_\Omega e^u \, dx = 1 + a \bar{u}.
\end{equation*}
Hence, $0 \le \bar{u} \le \xi_a$, where $\xi_a > 0$ is the unique positive root of $f(t) = 0$. Thus, $\bar{u}$ is bounded by $\xi_a$ uniformly for all $\varepsilon$.

Next, we bound the nonlinear term in $L^q(\Omega)$. Fix any $q > 2$. Then Lemma \ref{lem:jensen} guarantees that $\int_\Omega e^{q|v|} \, dx \le C_q$ for all $\varepsilon > \varepsilon_0$. Using $u = \bar{u} + v$ and $\bar{u} \le \xi_a$, we get
\begin{equation*}
\int_\Omega e^{q|u|} \, dx \le e^{q\bar{u}} \int_\Omega e^{q|v|} \, dx \le e^{q\xi_a} C_q.
\end{equation*}
Then we estimate the right-hand side as
\begin{align*}
\|f_\varepsilon(u)\|_{L^q(\Omega)} &\le \frac{1}{\varepsilon} \left(\|e^{|u|}\|_{L^q(\Omega)} + a\|u\|_{L^q(\Omega)}+|\Omega|^{1/q} \right)\\ 
&\le \frac{C}{\varepsilon}\left(\|e^{|u|}\|_{L^q(\Omega)}+1\right)\\
&\le \frac{C_q'}{\varepsilon} \le C_q'
\end{align*}
for all $\varepsilon \ge \max\{\varepsilon_0, 1\}$, where $C_q'$ is a constant independent of $\varepsilon$ and $u$.
Note that the inequality $|u| \le e^{|u|}$ was used to suppress $\|u\|_{L^q(\Omega)}$ by $\|e^{|u|}\|_{L^q(\Omega)}.$

Since $v$ solves the Neumann problem
\begin{equation*}
\begin{cases}
-\Delta v = f_\varepsilon(u) & \text{in } \Omega, \\
\frac{\partial v}{\partial \nu} = 0 & \text{on } \partial\Omega,
\end{cases}
\end{equation*}
with $\int_\Omega v \, dx = 0$, standard elliptic estimates provide a uniform bound in $W^{2,q}(\Omega)$:
\begin{equation*}
\|v\|_{W^{2,q}(\Omega)} \le C \|f_\varepsilon(u)\|_{L^q(\Omega)} \le C C_q',
\end{equation*}
where $C$ depends only on $q$ and $\Omega$. 
Because $q > 2$ and the spatial dimension is $2$, the Sobolev embedding theorem $W^{2,q}(\Omega) \hookrightarrow L^\infty(\Omega)$ ensures that
\begin{equation*}
\|v\|_{L^\infty(\Omega)} \le \tilde{C} \|v\|_{W^{2,q}(\Omega)} \le C_q''.
\end{equation*}

Finally, since $u = \bar{u} + v$, we conclude that
\begin{equation*}
\|u\|_{L^\infty(\Omega)} \le \bar{u} + \|v\|_{L^\infty(\Omega)} \le \xi_a + C_q'' := M,
\end{equation*}
where $M>0$ depends only on $a$ and $\Omega$.
\end{proof}

\section{Proof of the Main Theorem}
With the uniform $L^\infty$-estimate at hand, we now prove Theorem \ref{thm:main}. The proof relies on analyzing the Dirichlet energy of the fluctuation $v = u - \bar{u}$ and comparing it with the spectral gap of the Neumann Laplacian.

\begin{proof}
Since $\int_{\Omega}{v\ dx}=0$, multiplying equation \eqref{eq:main} by $v = u - \bar{u}$ and integrating by parts gives
\begin{equation} \label{eq:energy}
\varepsilon \int_\Omega |\nabla v|^2 \, dx = \int_\Omega (f(u) - f(\bar{u})) v \, dx.
\end{equation}
By the mean value theorem applied pointwise, we may write
\begin{equation*}
f(u) - f(\bar{u}) = f'(c(x))\, v(x)
\end{equation*}
for some $c(x)$ between $u(x)$ and $\bar{u}$. Since $\|u\|_{L^\infty} \le M$ by Proposition \ref{prop:L_infty}, $|f'(c(x))|$ is uniformly bounded by some constant $K:=\sup_{t \in [-M,M]}|f'(t)| = \max\{e^M-a, a-e^{-M}\}$. 
Applying the Poincaré inequality for functions with zero average,
\begin{equation*}
\int_\Omega |\nabla v|^2 \, dx \ge \mu_1 \int_\Omega v^2 \, dx,
\end{equation*}
where $\mu_1 > 0$ is the first nonzero Neumann eigenvalue, we deduce from \eqref{eq:energy} that
\begin{equation*}
\varepsilon \mu_1 \int_\Omega v^2 \, dx \le \int_\Omega f'(c(x))\, v^2 \, dx \le K \int_\Omega v^2 \, dx.
\end{equation*}
Hence,
\begin{equation*}
(\varepsilon \mu_1 - K)\int_\Omega v^2\,dx \le 0.
\end{equation*}
In particular, if $\varepsilon > K/\mu_1$, then the coefficient $\varepsilon \mu_1 - K$ is strictly positive, which forces $\int_\Omega v^2\,dx = 0$, and therefore $v \equiv 0$.
Consequently, $u \equiv \bar{u}$ is constant.

\end{proof}

\bibliographystyle{amsplain}

\end{document}